\newcommand{\fem}{\gamma} 
\newcommand{\mc}[1]{\mathcal{#1}} 
\newcommand{\mx}[1]{\displaystyle\mathbb{#1}} 
\newcommand{\dpar}[2]{\displaystyle\frac{\partial #1}{\partial #2}}
\newcommand{\Om}{\Omega}
\newcommand{\dis}{\displaystyle}
\newcommand{\C}{{\mathbb C}}
\newcommand{\cop}{{\mathcal F}} 
\newtheorem{thm}{Theorem}[section]
\newtheorem{prop}[thm]{Proposition} 
\newcommand{\numax}{{\nu_{\rm max}}}
\newcommand{\kmax}{{k_{\rm max}}}
\newcommand{\bftab}{\bfseries\small} 
  \pgfplotsset{
    compat=newest,
    tick label style={font=\scriptsize},
    label style={font=\scriptsize},
    legend style={font=\scriptsize}
  }
     \renewcommand{\tikzsetnextfilename}[1]{}
\newtheorem{theorem}{Theorem}
\newtheorem{lemma}[theorem]{Lemma}
\newtheorem{remark}[theorem]{\textit{Remark}}
\journal{arXiv}
\begin{document}

\begin{frontmatter}

\title{Efficient resonance computations for Helmholtz problems based on a Dirichlet-to-Neumann map}

\author{Juan Carlos Araujo-Cabarcas\fnref{umit}}
\fntext[umit]{Department of Mathematics and Mathematical Statistics, Umeå University, MIT-Huset, 90187 Umeå, Sweden}
\author{Christian Engström\fnref{umit}}

\author{Elias Jarlebring\fnref{kth}}
\fntext[kth]{Department of Mathematics, Royal Institute of Technology (KTH), Stockholm, SeRC Swedish e-Science Research Center}

\begin{abstract}
We present an efficient procedure for computing resonances and resonant modes of Helmholtz problems posed in exterior domains. The problem is formulated as a nonlinear eigenvalue problem (NEP), where the nonlinearity arises from the use of a Dirichlet-to-Neumann map, which accounts for modeling unbounded domains. We consider a variational formulation and show that the spectrum consists of isolated eigenvalues of finite multiplicity that only can accumulate at infinity. The proposed method is based on a high order finite element discretization combined with a specialization of the Tensor Infinite Arnoldi method. Using Toeplitz matrices, we show how to specialize this method to our specific structure. In particular we introduce a pole cancellation technique in order to increase the radius of convergence for computation of eigenvalues that lie close to the poles of the matrix-valued function. The solution scheme can be applied to multiple resonators with a varying refractive index that is not necessarily piecewise constant. We present two test cases to show stability, performance and numerical accuracy of the method. In particular the use of a high order finite element discretization together with TIAR results in an efficient and reliable method to compute resonances.
\end{abstract}

\begin{keyword}

Nonlinear eigenvalue problems \sep Helmholtz problem \sep scattering resonances \sep  Dirichlet-to-Neumann map \sep Arnoldi's method \sep Bessel functions \sep Matrix functions

\end{keyword}
\end{frontmatter}

\section{Introduction}\label{sec:intro} 

In this paper we consider the numerical approximation of resonances in an open system, where the solutions satisfy the Helmholtz equation for a given refractive index $\eta (x)$. In general, resonances of an operator are defined as poles of the resolvent operator taken in a particular generalized sense \cite{LectureZworski,MR1350074}. For Helmholtz equation Lenoir et al. \cite{lenoir92} have shown that resonances are solutions to a nonlinear eigenvalue problem (NEP) with a Dirichlet-to-Neumann (DtN) map $\mc G(\lambda )$ on an artificial boundary $\Gamma$. The pair $(u,\lambda)$ is a scattering resonance pair if
\begin{equation}
\begin{array}{rll}
\Delta u+\lambda ^2 \eta^2 u    = & 0 & \text{in}\,\,\Om, \\
\dpar{u}{n} = & \mc G(\lambda ) u & \text{on}\,\,\Gamma, \\
\end{array}
\label{eq:strong_DtN}
\end{equation}
where $\partial u/\partial n$ is the normal derivative and the non-negative function $\eta^2-1$ has compact support contained in the open domain $\Omega$. Hence, although our differential operator \eqref{eq:strong_DtN} is linear in $\lambda^2$ the DtN operator $\mc G(\lambda)$ (which we formalize in  Section~\ref{sect:background}) depends in a nonlinear way on the eigenvalue $\lambda$. 
In this work we present a new computational approach for approximating resonances of \eqref{eq:strong_DtN} 
in an efficient and accurate way. 
 
We present in Section~\ref{sec:variational} a variational formulation of the PDE \eqref{eq:strong_DtN} with the DtN map and show that the spectrum of the operator function consists of isolated eigenvalues of finite multiplicity, which accumulate only at infinity. This variational formulation is the base for the finite element method (FEM) in Section~\ref{sec:Disc}. In the FEM-implementation we discretize the operator function using high order Gauss-Lobatto shape functions and apply the $p$-version of the finite element method. The lower part of the spectrum of the operator function is then well approximated by the matrix function. Then very accurate approximations of the scattering resonance pairs are obtained if the nonlinear matrix eigenvalue problem can be solved accurately.

The considered NEP is of the type:
 find $\lambda\in\CC$ in an open subset of the complex plane and a non-zero $u\in\CC^n$ such that
\begin{equation}\label{eq:nep}
	T(\lambda)u=0.
\end{equation}
In our case $T$ is meromorphic in $\CC$, with poles in the region of interest defined as scaled roots of Hankel functions. Many 
numerical methods for the NEP \eqref{eq:nep}
have been developed in the numerical linear algebra community, in particular when $T$ is holomorphic in a large domain.
Since the NEP with an arbitrary normalization is a system of nonlinear equations,
Newton's method can be applied and considerably improved, e.g., by using block variations that can compute several eigenpairs simultaneously \cite{Kressner:2009:BLOCKNEWTON}. However, Newton-type methods bear the danger that some eigenvalues close to a given target could be missed.

There are also generalizations of successful
methods for linear eigenvalue problems, e.g., the nonlinear Arnoldi method \cite{Voss:2004:ARNOLDI},
the Jacobi-Davidson method \cite{Betcke:2004:JD} and LOBPCG \cite{Szyld:2014:LOBPCGTR}.
Numerical methods for  NEP can be based on contour integrals of the generalized resolvent $T^{-1}(\lambda)$ \cite{Asakura:2009:NUMERICAL,Beyn:2011:INTEGRAL,MR3423605,VanBarel:2016:CONTOUR,Ikegami:2010:FILTER}. The DtN-map is the source of the complicated nonlinearity in the eigenvalue parameter $\lambda$. 
Several other problems have been approached with artificial boundary conditions
and NEPs; see, e.g., the NEP arising in 
the modeling of an electromagnetic cavity \cite{Liao:2006:SOLVING}, 
the model of an optical fiber in \cite{Kaufman:2006:FIBER}
and bi-periodic slabs \cite{Tausch:2013:SLABS}. 
There are also several approaches leading to NEPs developed in the context of  photonic crystals 
\cite{Spence:2005:PHOTONIC,Fliss:2013:DIRICHLET,Klindworth:2014:DTN0,Dettmann:2009:INTERNAL}. To our knowledge, none of the methods developed in those papers have been adapted to resonance problems of our type.

Our methods belong to a class of methods which can be interpreted as Krylov methods, either for an infinite-dimensional problem, or as a dynamically increasing companion linearization of an approximation of the problem
\cite{Jarlebring:2012:INFARNOLDI,VanBeeumen:2013:RATIONAL,Guttel:2014:NLEIGS}.
For recent developments and problems see \cite{Voss:2013:NEPCHAPTER}  and \cite{Betcke:2013:NLEVPCOLL}.
Our approach is based on the infinite Arnoldi (IAR) method \cite{Jarlebring:2012:INFARNOLDI}.
In particular, we  adapt the variant with a tensor representation of the basis presented in \cite{Jarlebring:2015:WTIARTR}, called 
the tensor infinite Arnoldi method (TIAR). 
This method is designed to find all eigenvalues close to a given shift, where the radius of convergence depends on properties of the matrix-valued function \eqref{eq:nep}.
The infinite
Arnoldi method (both IAR and TIAR) requires access a procedure
to compute a quantity involving the derivatives of the problem. In Section~\ref{sec:TIAR} we derive an algorithm 
to compute the necessary 
quantities for our NEP which contains nonlinearities expressed as quotients of Hankel functions. 
In order to improve the convergence of the method, we introduce a pole cancellation technique 
which transforms the problem by removing poles.

In Section~\ref{sec:simulations} we provide a characterization of the performance of our approach.
In particular, we show that the new infinite Arnoldi method together with a $p$-FEM strategy is an efficient and reliable tool for resonance calculations with the DtN map.
 
\begin{figure}[!h]
	\centering
	\resizebox{5cm}{5cm}{%
		\begin{tikzpicture}[thick,scale=0.3, every node/.style={scale=0.8}]
			\tikzstyle{ann} = [fill=none,font=\large,inner sep=4pt]
		
			\draw( 0.00, 0.0) node { \includegraphics[scale=0.5]{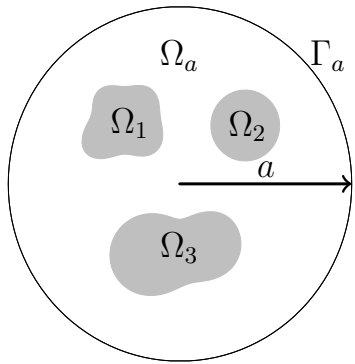} };
		
			\node[ann] at (2.8,0.45) {$a$};
			\draw[arrows=->,line width=0.8pt](0.0,0.0)--(5.55,0.0);
		
			\fill(0.0,0.0) circle (1pt);
		
			\node[ann] at ( 0.00, 4.0) {$\Omega_a$};
			\node[ann] at ( 4.80, 4.0) {$\Gamma_a$};
			\node[ann] at (-1.60, 1.9) {$\Omega_1$};
			\node[ann] at ( 2.20, 1.8) {$\Omega_2$};
			\node[ann] at ( 0.0,-2.00) {$\Omega_3$};
		
		\end{tikzpicture}
	}
	\caption{Example geometry of resonators $\Omega_i$, $i=1,2,3$ bounded by a circle.}
	\label{fig:domain}
\end{figure}

\section{Background and preliminaries}\label{sect:background}
Results for the problem \eqref{eq:strong_DtN} can be found in a considerable amount of literature; see \cite{MR996423,lenoir92,Colton+Kress1992} and references therein. For $\im \lambda >0$ we have uniqueness results \cite[Chapter VIII]{MR996423} and resonance values are therefore in the region $\im \lambda <0$. 

Let $\Omega_a\subset\mx R^2$ be an open disk of radius $a$ and boundary $\Gamma_a$. Assume $\eta\in L^{\infty}(\Om_a)$ and that the non-negative function $\eta^2-1$ has compact support contained in $\Omega_a$. 
A schematic setup of an example is illustrated in Figure~\ref{fig:domain}. The resonance problem restricted to $\Om_a$ is formally to find non-trivial solutions $(u,\lambda )$ such that \eqref{eq:strong_DtN} holds, where

the DtN operator on the circle $\Gamma_a$ has the explicit form
\begin{equation}
\mc G(\lambda )u:=\frac{1}{2\pi}
\sum_{\nu=-\infty}^{\infty}g_\nu(\lambda )\,e^{i\nu\theta}
\int_0^{2\pi}\!\!u(a,\theta')\,e^{-i\nu\theta'}d\theta',
\label{eq:DtN}
\end{equation}
where 
\begin{equation}  \label{eq:qdef}
g_\nu(\lambda ):=\lambda \frac{ H'_\nu(\lambda a)}{H_\nu(\lambda a)}
\end{equation}
and $\mc G(\lambda ):H^{1/2}(\Gamma_a)\rightarrow H^{-1/2}(\Gamma_a)$ is bounded \cite{lenoir92}. In the following, we identify the dual pairing $\langle\cdot,\cdot\rangle_{H^{-1/2}(\Gamma_a)\times H^{1/2}(\Gamma_a)}$ with the $L^2$-inner product $(\cdot,\cdot)_{\Gamma_a}$ over $\Gamma_a$. The theory presented in \cite{lenoir92} can with minor changes be used in the present case to derive properties of a variational formulation of the problem.
\subsection{Variational formulation}\label{sec:variational} 
Let $S$ denote the union of the set of zeros of $H_\nu(\lambda a),\,\nu\in \mx Z$. The eigenvalues of \eqref{eq:strong_DtN} are determined by the following variational problem:
Find $u\in H^1(\Om_a)\setminus \{0\}$ and $\lambda \in\mc D:=\mx C\setminus \{\mx R^-\cup S\}$ such that for all $v\in H^1(\Om_a)$
\begin{equation}
	(\nabla u,\nabla v)_{\Om_a}-\lambda^2(\eta^2 u,v)_{\Om_a}-(\mc G(\lambda ) u,v)_{\Gamma_a}=0,
	\label{eq:eig_prob}
\end{equation}
where $(u,v)_{\Om_a}:=\int_{\Om_a}u\bar v\,dx$, $(\nabla u,\nabla v)_{\Om_a}:=\int_{\Om_a}\nabla u\cdot\nabla\bar v\,dx$, and 
\begin{equation}
(\mc Gu,v)_{\Gamma_a}=\dis \sum_{\nu=-\infty}^{\infty}\lambda a\frac{H'_\nu(\lambda a)}{H_\nu(\lambda a)}\,\hat u_\nu\, \bar {\hat v}_\nu,\quad \hat \varphi_\nu=\frac{1}{\sqrt{2\pi}}\int_{0}^{2\pi}\varphi(a,\theta')\,e^{-i\nu\theta'}d\theta'.
\label{eq:dtn_1}
\end{equation}
Let $(u,v)_1:=(\nabla u,\nabla v)_{\Om_a}+(\eta^2 u,v)_{\Om_a}$. Then the norm $\norm{u}_1:=\sqrt{(u,u)_1}$ is equivalent to the standard norm on $H^1(\Om_a)$. Define the operator
$\cop(\lambda ):H^1(\Om_a)\rightarrow H^1(\Om_a)$ by
\begin{equation}
	(\cop(\lambda ) u,v)_1:=(\lambda^2+1)(\eta^2 u,v)_{\Om_a}+(\mc G(\lambda ) u,v)_{\Gamma_a}.
	\label{eq:op_T}
\end{equation}
An operator formulation of \eqref{eq:eig_prob} is to find $u\in H^1(\Om_a)\setminus \{0\}$ and $\lambda \in\mc D$ such that 
\begin{equation}
	(I-\cop(\lambda )) u = 0.
	\label{eq:equiv_eig}
\end{equation}
Let $\mc L(H^1(\Om_a))$ denote the space of bounded linear operators on $H^1(\Om_a)$. An operator $A\in\mc L(H^1(\Om_a))$ is Fredholm if it has finite-dimensional kernel $\text{ker}\,A$ and cokernal $\text{coker}\,A$. The index of a Fredholm operator is $\text{ind}\,A:=\text{dim}\,\text{ker}\,A-\text{dim}\,\text{coker}\,A$. The essential spectrum of $A$ denoted $\sigma_{\rm{ess}}(A)$ is the set of of complex numbers such that $A-\lambda I$ is not Fredholm. Let $\sigma_{\infty}(A)\subset\sigma_{\rm{ess}}(A)$ denote the set of eigenvalues of infinite multiplicity.

Let $\mc G_\numax(\lambda )$ denote the operator \eqref{eq:DtN} truncated after $|\nu|=\numax$ and let $I-\cop_\numax(\lambda )$, $\lambda \in\mc D$ denote the operator defined by 
\begin{equation}
	((I-\cop_\numax(\lambda ))u,v)_1:=(u,v)_1-(\lambda^2+1)(\eta^2 u,v)_{\Om_a}-(\mc G_\numax(\lambda ) u,v)_{\Gamma_a}.
	\label{eq:op_Tl}
\end{equation}
The operator function with a truncated DtN map \eqref{eq:op_Tl} is the base for the numerical method and the following proposition shows that the spectrum is discrete with no finite accumulation point in the right-half plane.
\begin{prop}\label{prop:T}
Let $\numax\in\{0,1,\dots,\infty\}$. Then the $H^1(\Om_a)$-spectrum of the operator valued function  $I-\cop_\numax:\mc D\rightarrow \mc L(H^1(\Om_a))$ consists of isolated eigenvalues of finite multiplicity. Moreover, these eigenvalues can not have a finite accumulation point in the right-half plane.
\end{prop}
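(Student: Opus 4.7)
The plan is to apply the meromorphic Fredholm alternative (Steinberg / Gohberg--Sigal) to the operator-valued function $\lambda\mapsto I-\cop_\numax(\lambda)$ on $\mc D$. The three ingredients I need are: (i) $I-\cop_\numax(\lambda)$ is Fredholm of index zero for every $\lambda\in\mc D$; (ii) $\lambda\mapsto\cop_\numax(\lambda)$ is holomorphic on $\mc D$; (iii) there is at least one regular point $\lambda_0$ in the connected component of $\mc D$ I care about. Once these hold, the Fredholm analytic theorem delivers the claimed discreteness and finite algebraic multiplicity on that component.

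For (i) in the truncated case $\numax<\infty$, the Rellich--Kondrachov embedding $H^1(\Om_a)\hookrightarrow L^2(\Om_a)$ makes the form $(\eta^2u,v)_{\Om_a}$ induce a compact operator on $H^1(\Om_a)$, while the truncated DtN part $(\mc G_\numax(\lambda)u,v)_{\Gamma_a}$ is of finite rank because it retains only $2\numax+1$ Fourier modes on $\Gamma_a$. Hence $\cop_\numax(\lambda)$ is compact and $I-\cop_\numax(\lambda)$ is Fredholm of index zero. For $\numax=\infty$ the DtN part is no longer compact, and I would use the large-order uniform asymptotics of Hankel functions, which give $g_\nu(\lambda)=-|\nu|/a+O(1/|\nu|)$ as $|\nu|\to\infty$ on compact subsets of $\mc D$, to split $\mc G(\lambda)=\mc G_\infty+\mc K(\lambda)$. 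The principal part $\mc G_\infty$ is essentially $-|D_\theta|/a$ on $\Gamma_a$; adding it to $(\cdot,\cdot)_1$ yields a bounded, coercive, symmetric form on $H^1(\Om_a)$, while $\mc K(\lambda)$ gains a full derivative on $\Gamma_a$ and therefore, combined with the compact trace $H^1(\Om_a)\to H^{1/2-\epsilon}(\Gamma_a)$, induces a compact operator on $H^1(\Om_a)$. Writing $I-\cop(\lambda)$ as coercive-plus-compact then gives Fredholm of index zero.

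For (ii), holomorphy of $\lambda\mapsto\lambda^2$ is clear, and each $g_\nu$ is holomorphic on $\mc D$ by the very definition of $S$. In the case $\numax=\infty$ the series defining $(\mc G(\lambda)u,v)_{\Gamma_a}$ converges uniformly on compact subsets of $\mc D$ thanks to the $O(1/|\nu|)$ remainder bound and the $\ell^2$-decay of $(1+|\nu|)^{1/2}\hat u_\nu$ implied by $u|_{\Gamma_a}\in H^{1/2}(\Gamma_a)$, so the limit is operator-holomorphic. For (iii), I invoke the uniqueness result cited from \cite[Chapter VIII]{MR996423}: for $\im\lambda>0$ the only $H^1$-solution of \eqref{eq:strong_DtN} is trivial, so $I-\cop_\numax(\lambda)$ is injective there; combined with Fredholm of index zero this gives bounded invertibility at any $\lambda_0\in\{\im\lambda>0\}\subset\mc D$. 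Since the zero set $S$ of the Hankel functions lies in the open lower half-plane, the right half-plane sits inside the connected component of $\mc D$ that also contains $\{\im\lambda>0\}$; the meromorphic Fredholm theorem then delivers isolated eigenvalues of finite multiplicity with no finite accumulation point in this component, which is precisely the statement.

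The main obstacle I foresee is the Fredholm step for $\numax=\infty$: isolating the first-order principal symbol of the DtN and verifying in detail that the remainder $\mc K(\lambda)$ is genuinely compact on $H^1(\Om_a)$ uniformly on compact subsets of $\mc D$. This is the point at which one must invoke (and slightly adapt) the pseudodifferential analysis of the DtN already carried out in \cite{lenoir92}; the rest of the argument is a fairly standard assembly.
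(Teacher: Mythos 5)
Your skeleton (holomorphy on $\mc D$, Fredholm of index zero, one regular point, then the analytic Fredholm theorem) coincides with the first part of the paper's proof --- the paper simply cites \cite[props.~4 and 5]{lenoir92} for the first two ingredients instead of reproving them --- but there is a genuine gap at the final step, and it is exactly the step that constitutes the second assertion of the proposition. You claim that ``the right half-plane sits inside the connected component of $\mc D$'' and hence that the Fredholm analytic theorem by itself rules out finite accumulation points in the right half-plane. This is false: the set $S$ of scaled Hankel-function zeros intersects the open lower-right quadrant (these are precisely the poles plotted in Figure \ref{fig:cancel}, the ones that motivate the pole-cancellation technique of Section \ref{sec:pole_cancel}), so the right half-plane is \emph{not} contained in $\mc D$. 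The analytic Fredholm theorem makes the eigenvalues isolated in $\mc D$ and forbids accumulation at points of $\mc D$; it says nothing about accumulation at boundary points of $\mc D$, i.e.\ at the poles. The paper closes this hole with an argument your proposal lacks: in the right half-plane eigenvalues could a priori accumulate only at poles, but the spectrum is independent of the artificial radius $a$ once $\operatorname{supp}(\eta^2-1)\subset\Omega_a$, whereas the pole locations (zeros of $\lambda\mapsto H_\nu(\lambda a)$) move with $a$; consequently no accumulation at a pole can occur.

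A second, more minor, gap is in your choice of regular point. The uniqueness theorem of \cite[Chapter VIII]{MR996423} concerns the problem \eqref{eq:strong_DtN} with the \emph{full} DtN map, so it yields injectivity of $I-\cop_\numax(\lambda)$ for $\im\lambda>0$ only when $\numax=\infty$; for a truncated $\mc G_\numax$ the boundary condition is different and the citation does not apply as stated. The paper avoids this by a computation valid uniformly in $\numax$: at $\lambda=i$ the identity \eqref{eq:MBessel}, $ia H_\nu'(ia)/H_\nu(ia)=a K_\nu'(a)/K_\nu(a)<0$ (modified Bessel functions), makes $(\mc G_\numax(i)u,u)_{\Gamma_a}<0$ termwise, giving the coercivity estimate $(u,u)_1-(\cop_\numax(i)u,u)_1\geq\|u\|_1^2$ and hence invertibility at $\lambda=i$ for every $\numax\in\{0,1,\dots,\infty\}$. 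This fix is easy to adopt (the same sign argument also proves uniqueness for the truncated problem on the positive imaginary axis), but as written your step (iii) covers only $\numax=\infty$. Your hands-on treatment of Fredholmness --- compact plus finite rank for $\numax<\infty$, and the principal-symbol splitting of the DtN for $\numax=\infty$ --- is a reasonable self-contained substitute for the citations to \cite{lenoir92}, provided the compactness of the remainder is carried out in detail as you acknowledge.
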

\begin{proof}
The function $\cop_\numax:\mc D\rightarrow \mc L(H^1(\Om_a))$ is holomorphic \cite[prop 4]{lenoir92} and $I-\cop_\numax(\lambda )$ is a Fredholm operator of index zero \cite[prop 5]{lenoir92}. Moreover, we have
\begin{equation}\label{eq:MBessel}
	ia\frac{ H'_\nu(ia)}{H_\nu(ia)}=a\frac{ K'_\nu(a)}{K_\nu(a)},
\end{equation}
where $K_\nu(a)=\frac{\pi}{2}i^{\nu+1}H_\nu (ia)$, $\nu\in\mx Z$ are the modified Bessel functions \cite{MR1349110}. The expression \eqref{eq:MBessel} is negative since $K_\nu(a)>0$ and $K'_\nu(a)<0$ for $a>0$ \cite[p. 181]{MR1349110}. Hence $(\mc G_\numax(i) u,u)_{\Gamma_a}<0$ and
\[
	(u,u)_1-(\cop_\numax(i) u,u)_1=(u,u)_1-(\mc G_\numax(i) u,u)_{\Gamma_a}\geq \|u\|_1^2,
\]
which shows that the resolvent set of $I-\cop$ is non-empty. Hence, from the analytical Fredholm theorem follows that the spectrum is discrete and all eigenvalues are of finite multiplicity \cite[Theorem 5.1]{Gohberg+Krein1969}. In the right half-plane the eigenvalues can therefore only accumulate at the poles. The location of the poles is a function of $a$, but the spectrum is for large enough $a$ independent of $a$, which implies that we have no accumulation in the right half-plane.
\end{proof}
In Section \ref{sec:pole_cancel}, we improve convergence of the infinite Arnoldi method by multiplying the original matrix-valued function with a polynomial. In Proposition \ref{prop:tildeT}, we prove basic properties of the underlaying operator-valued function. This proposition shows that the canceled poles will be in the essential spectrum of the modified operator.

\begin{prop}\label{prop:tildeT}
Take $\mc B\subset\overline{\mc D}$ in the right-half plane such that $\cop$ has exactly one pole $z$ in $\mc B$. Then is $\tilde{T}(\lambda ):H^1(\Om_a)\rightarrow H^1(\Om_a)$,
\begin{equation}
	\tilde{\mc T}(\lambda):=(\lambda-z)I-\tilde{\mc F}(\lambda),\quad \tilde{\mc F}(\lambda):=(\lambda-z)\mc F (\lambda)
\end{equation}
holomorphic in $\mc B$, $\{z\}=\sigma_{\rm{ess}}(\tilde{\mc{T}})=\sigma_{\infty}(\tilde{\mc{T}})$, and $\tilde{\mc{T}}(z)$ is of rank one.
\end{prop}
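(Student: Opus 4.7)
The plan is to exploit the factorization $\tilde{\mc{T}}(\lambda) = (\lambda-z)\bigl(I - \mc{F}(\lambda)\bigr)$ together with a careful analysis of the local structure of $\mc{F}$ near $z$. All three conclusions reduce to one structural fact: that $z$ is a simple pole of $\mc{F}$ coming from a zero of a single Hankel function $H_\nu(\lambda a)$, and that the corresponding residue of $\mc{F}$ is a finite-rank (here, rank-one) operator.

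For the holomorphy claim, I would first argue that in $\overline{\mc{D}}$ the only poles of $\mc{F}$ are the scaled zeros of the Hankel functions entering \eqref{eq:DtN}--\eqref{eq:qdef}, since $\cop$ is otherwise holomorphic by \cite[Prop.~4]{lenoir92}. Because zeros of $H_\nu$ in the relevant half-plane are simple (standard fact from the theory of Bessel/Hankel functions), each $g_\nu$ has at most a simple pole at $z$, and there are only finitely many indices $\nu$ for which $H_\nu(za)=0$. Multiplication by $(\lambda-z)$ cancels this simple pole, so $\tilde{\mc{F}}(\lambda)=(\lambda-z)\mc{F}(\lambda)$ extends holomorphically across $z$ and is therefore holomorphic on all of $\mc{B}$, and hence so is $\tilde{\mc{T}}$.

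For the rank and spectrum at $\lambda=z$, I would evaluate
\[
\tilde{\mc{T}}(z) = -\tilde{\mc{F}}(z) = -\lim_{\lambda\to z}(\lambda-z)\,\mc{F}(\lambda) = -\operatorname{Res}_{\lambda=z}\mc{F}(\lambda).
\]
Inserting the explicit Fourier expansion \eqref{eq:dtn_1} into the defining identity \eqref{eq:op_T}, one reads off that only the mode(s) with $H_\nu(za)=0$ contribute to the residue; concretely, the residue acts on $u\in H^1(\Om_a)$ as a boundary projection onto the corresponding Fourier mode, followed by a Riesz lift to $H^1(\Om_a)$, which has one-dimensional range. This establishes the rank-one claim. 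Since $\tilde{\mc{T}}(z)$ is a nonzero bounded operator of rank one on the infinite-dimensional space $H^1(\Om_a)$, its kernel is infinite-dimensional, so $z$ is an eigenvalue of infinite multiplicity; its range is one-dimensional, so the cokernel is infinite-dimensional and $\tilde{\mc{T}}(z)$ is not Fredholm. Hence $z\in\sigma_\infty(\tilde{\mc{T}})\subset\sigma_{\mathrm{ess}}(\tilde{\mc{T}})$.

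For the reverse inclusion, take $\lambda\in\mc{B}\setminus\{z\}$. Then $\tilde{\mc{T}}(\lambda)=(\lambda-z)\bigl(I-\mc{F}(\lambda)\bigr)$, and $I-\mc{F}(\lambda)$ is Fredholm of index zero by \cite[Prop.~5]{lenoir92} (as already invoked in the proof of Proposition~\ref{prop:T}). Multiplication by the nonzero scalar $\lambda-z$ preserves the Fredholm property, so $\tilde{\mc{T}}(\lambda)$ is Fredholm and $\lambda\notin\sigma_{\mathrm{ess}}(\tilde{\mc{T}})$. Combining the two inclusions gives $\{z\}=\sigma_{\mathrm{ess}}(\tilde{\mc{T}})=\sigma_\infty(\tilde{\mc{T}})$. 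The main technical obstacle is the rank-one identification of the residue: it requires tracing the pole through the Fourier expansion \eqref{eq:dtn_1} and checking that the mode selection indeed yields a rank-one operator (taking into account the symmetry $H_{-n}=(-1)^n H_n$, which must be handled by grouping the $\pm\nu$ terms); the remaining Fredholm and holomorphy arguments are then standard bookkeeping.
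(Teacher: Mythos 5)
Your proof follows essentially the same route as the paper's: simplicity of the Hankel-function zeros gives holomorphy of $\tilde{\mc T}$ after multiplication by $(\lambda-z)$, and the residue computation through the Fourier expansion identifies $\tilde{\mc{T}}(z)$ as a finite-rank (rank-one) operator, whence $z\in\sigma_{\infty}(\tilde{\mc{T}})$. You additionally spell out two points the paper leaves implicit: the reverse inclusion for $\sigma_{\rm{ess}}$ (Fredholmness of $\tilde{\mc T}(\lambda)=(\lambda-z)(I-\cop(\lambda))$ for $\lambda\neq z$), and the symmetry $H_{-\nu}=(-1)^{\nu}H_{\nu}$, which the paper's proof sidesteps by simply assuming a single index $\nu_z$ with $H_{\nu_z}(z)=0$ (note that when $\nu_z\neq 0$ the paired modes $\pm\nu_z$ actually make the residue rank two rather than rank one, so your flagged concern is real, though it affects the paper's statement as much as your argument, and the finite-rank conclusion needed for $\{z\}=\sigma_{\rm{ess}}(\tilde{\mc{T}})=\sigma_{\infty}(\tilde{\mc{T}})$ survives).
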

\begin{proof}
Take without loss of generality $a=1$. Since all zeros of $H_\nu(\lambda)$ are simple \cite[p 370]{Abramowitz+Stegun1970}, we have by definition $H_\nu(\lambda)=(\lambda-z)f(\lambda)$ with $f$ holomorphic and $f(z)\neq 0$. Hence,
\[
	\frac{H'_\nu(\lambda)}{H_\nu(\lambda)}=\frac{1}{\lambda-z}+\frac{f'(\lambda)}{f(\lambda)},
\]
which implies that $\tilde{\mc T}$ is holomorphic in $\mc B$. Since $H^1(\Om_a)$ is infinite dimensional, it follows that $\tilde{\mc{T}}(z)$ is not Fredholm.  Assume $H_{\nu_z}(z)=0$,  $H_{\nu}(z)\neq 0$, $\nu\neq\nu_z$. Then  $(\tilde{\mc{T}}(z)u,v)_1=-z\hat u_{\nu_z}\, \bar {\hat v}_{\nu_z}$, which implies that $\tilde{\mc{T}}(z)$ and  $\tilde{\mc{T} }(z)^*$ are of
rank one and hence $z\in\sigma_{\infty}(\tilde{\mc{T}})$. 
\end{proof}

\begin{remark}
Proposition \ref{prop:tildeT} shows that a pole $z$ after multiplication by $\lambda-z$ is an eigenvalue of infinite multiplicity of the new operator function. This change of the spectral properties also apply more generally. Let  $H$ denote an infinite dimensional Hilbert space and assume that $F$ is a $\mc L(H)$-valued finitely meromorphic function at $z$. Hence,
\[
	F(\lambda)=\sum_{m=-s}^{\infty}(\lambda-z)^m F_m,
\]
where $F_{-s},F_{-s+1},\dots,F_{-1}$ are finite rank. Define the bounded operator function 
\begin{equation}
	\tilde{\mc T}(\lambda):=(\lambda-z)^sI-\tilde{\mc F}(\lambda),\quad \tilde{\mc F}(\lambda):=(\lambda-z)^s\mc F (\lambda).
\end{equation}
Then  $z\in\sigma_{\infty}(\tilde{\mc{T}})$, since $\tilde{\mc T}(z)=-F_{-s}$ is finite rank. However if $F_{-s}$ is not finite rank, $z$ will in general not be an eigenvalue of infinite multiplicity; see for example \cite{MR3423605}.
\end{remark}

\section{Discretization with the finite element method}\label{sec:Disc}
The disk $\Omega_a$, depicted in Figure \ref{fig:domain}, is covered with a regular and quasi uniform finite element mesh $\mc T$ consisting of quadrilateral elements $\{K_i\}^N_{i=1}$. Let $\rho_i$ be the diameter of the largest ball contained in $K_i$ and denote by $h$ the maximum mesh size $h:=\max{\rho_i}$. Let $\mc P_p$ denote the space of polynomials on $\mx R^2$ of degree $\leq p$ and set $\fem:=\{h,p\}$ \cite[Ch 4]{Schwab1998}. We define the finite element space $S^{\fem}(\Omega_a):=\{u\in H^1(\Omega_a):\left. u\right|_{K_i} \in\mc P_p(K_i)\,\,\hbox{for}\,\,K_i\in\mc T\}$, and $N:=\dim(S^{\fem}(\Omega_a))$ \cite{Babuska92}. Furthermore, all our computations are done in the approximated domain $\Omega_a^\fem$ by using curvilinear elements following standard procedures \cite{Babuska92}.

From \eqref{eq:eig_prob} we state the finite element problem: Find $u_\fem\in S^{\fem}(\Om_a^\fem)\setminus \{0\}$ and $\lambda_\fem \in\mc D$ such that
\begin{equation}
	(\nabla u_\fem,\nabla v)_{\Om^\fem_a}-\lambda_\fem^2(\eta^2 u_\fem,v)_{\Om^\fem_a}-(\mc{G}_\numax(\lambda_\fem ) u_\fem,v)_{\Gamma^\fem_a}=0\,\, \hbox{for all}\,\, v\in S^{\fem}(\Om_a^\fem).
	\label{eq:eig_fem}
\end{equation}
We showed in Section \ref{sec:variational} that the analytic operator function $I-\cop_\numax$ with a truncated DtN map is Fredholm-valued on $\mc D$ and that the resolvent set is non-empty. Hence, all eigenvalues are isolated and of finite multiplicity. Moreover, from Karma \cite{Karma1996a,Karma1996b} follows that any sequence $\{\lambda_\fem\}$, dim $S^{\fem}(\Om_a^\fem)\rightarrow\infty$ of approximative eigenvalues of \eqref{eq:eig_fem} converges to an eigenvalue $\lambda$ of \eqref{eq:eig_prob}. The eigenfunctions are in $H^2(\Omega_a)$ and are piecewise analytic if the interfaces are analytic curves. Optimal convergence is expected under the assumptions that all interfaces are resolved by curvilinear cells and the eigenvalues are semi-simple. Hence, exponential convergence is expected with $p$-FEM and optimal converge rates are expected with $h$-FEM \cite{Babuska+Buo+Osborn1989,Hae+Babuska}.

\subsection{Assembly of the FE matrices}\label{sec:DtN_2d}
Let $\{\varphi_1,\dots,\varphi_N\}$ be a basis of $S^{\fem}(\Omega_a^\fem)$. Then $u_\fem\in S^{\fem}(\Omega_a^\fem)$ can be written in the form
\begin{equation}
	u_\fem=\sum_{j=1}^N \xi_j\,\varphi_j
\label{eq:fem_ans}
\end{equation}
and the entries in the finite element matrices are
\begin{equation}
A_{ij}= (\nabla\varphi_j, \nabla\varphi_i)_{\Om^\fem_a},\quad M_{ij}=(\eta^2\,\varphi_j, \varphi_i)_{\Om^\fem_a},\quad G_{ij}(\lambda)=\sum_{\nu=-l}^{l}\lambda a\frac{H'_\nu(\lambda a)}{H_\nu(\lambda a)}\,Q^\nu_{ij}
\label{eq:def_fem}
\end{equation}
with 
 \begin{equation}
	Q^\nu_{ij}=\hat{\varphi}_j^\nu\, \bar{ \hat{\varphi}}_i^\nu,\quad \hat \varphi_j^\nu=\frac{1}{\sqrt{2\pi}}\int_{0}^{2\pi}\varphi_j(a,\theta')\,e^{-i\nu\theta'}d\theta', \quad i,j=1,\dots,N.
\label{eq:dtn_1}
\end{equation}
The nonlinear matrix eigenvalue problem is then: Find the eigenpars $(\lambda ,{\xi})\in\mc D\times \C^N\setminus \{0\}$ such that
\begin{equation}
T (\lambda ) \,{\xi}:=\left(A-\lambda^2M-G(\lambda)\right){\xi}=0.
\label{eq:eig_DtN}
\end{equation}
In the assembly routine, we only store the vectors $q_j^\nu=\hat{\varphi}_j^\nu$ and then compute the matrices as $Q^\nu = q^\nu\otimes \bar q^\nu$. Notice that $q_j^\nu=0$ for nodes $x_j$ such that $\hbox{supp}(\varphi_j)\cap \Gamma_a=\emptyset$. If the ordering of nodes is such that the $N_a$ boundary nodes are placed first, then the $Q^\nu$ matrices have a dense upper-left block of size $N_a\times N_a$. This lost of sparsity is taken into account in the assembly routine, where we allow extra entries in the sparsitty pattern of the FE matrices.

It is a standard technique to use Gauss-Legendre quadratures for the evaluation of integrals in FE. However, the trace integral \eqref{eq:dtn_1} requires that the number of quadrature points is increased linearly with $\nu$, because the integrand contains $e^{i\nu \theta(x_1,\,x_2)}$.

All numerical experiments have been carried out using the finite element library \emph{deal.II} \cite{dealII82} with Gauss-Lobatto shape functions \cite[Sec. 1.2.3]{solin04}. For fast assembly and computations with complex numbers the package PETSc \cite{petsc-efficient} is used.

The computational platform was provided by the High-Performance Computing Center North (HPC2N) at Ume\aa\, University, and all experiments were run on the distributed memory system Abisko. The jobs were run in serial on an exclusive node: during the process, no other jobs were running on the same node. Node specifications: four AMD Opteron 6238 processors with a total of 48 cores per node.

\section{Specialization of the infinite Arnoldi method}\label{sec:TIAR}

Our algorithm for solving the NEP \eqref{eq:eig_DtN} is based on the tensor infinite Arnoldi method  (TIAR) \cite[Algorithm~2]{Jarlebring:2015:WTIARTR}, which is an improvement of the infinite Arnoldi method (IAR) \cite{Jarlebring:2012:INFARNOLDI}.
The version of the infinite Arnoldi method considered here can be viewed as the standard Arnoldi method applied to the companion linearization arising from a Taylor expansion of an analytic matrix-valued function  $T$ \cite{Jarlebring:2012:INFARNOLDI}.
More precisely, it can be derived from a particular companion linearization of the Taylor expansion of $T$ and Arnoldi's method
for eigenvalue problems. The particular choice of companion linearization
provides a structure such that the truncation parameter in a certain sense can be increased to infinity and the algorithm is therefore equivalent to
Arnoldi's method applied to an infinite matrix.

Hence, the algorithm generates a Hessenberg matrix and the eigenvalues of this matrix correspond to
eigenvalue approximations of the NEP.

TIAR is a variant of IAR where a tensor representation of the basis of IAR is used, which results in an algorithm of the same complexity as IAR but it requires less memory.
 Moreover, the usage of tensors in
  TIAR makes it considerably more efficient than IAR for certain types of problems
  \cite{Jarlebring:2015:WTIARTR} on modern computer architectures. We derive below a new version of TIAR adapted to the special structure of \eqref{eq:eig_DtN}.

\subsection{Quantities required for the infinite Arnoldi method}
All of the variants of the infinite Arnoldi method
require (in some way) access to derivatives. In TIAR we need a procedure to compute
\begin{equation}\label{eq:x0}
  x_0=-T(\mu)^{-1}\left(\sum_{i=1}^{k}T^{(i)}(\mu)x_i\right),
\end{equation}
where $k$ is the iteration count, and we compute eigenvalues close to a target $\mu$.

The matrix $T(\mu)$ is independent of $k$ and we compute an LU-factorization of $T(\mu)$ before starting the iterations in TIAR. Without loss of generality, we write $T$ of the form 
\begin{equation}\label{eq:SMF}
  T(\lambda)=\sum_{j=1}^NA_j f_j(\lambda).
\end{equation}
Then, we express the NEP \eqref{eq:eig_DtN} in the form \eqref{eq:SMF} with
\begin{subequations}\label{eq:first_terms}
\begin{eqnarray}
  A_1&=&A,\;\;\; f_1(\lambda)=1   \label{eq:first_terms_A1}\\
  A_2&=&M,\;\;\; f_2(\lambda)=-\lambda^2 \label{eq:first_terms_A2}
\end{eqnarray}
\end{subequations}
and for $j=3,\ldots,2\numax+3$, 
\begin{subequations}\label{eq:second_terms}
\begin{eqnarray}
  A_j&=&-a q_{j-\numax-3}q_{j-\numax-3}^T   \\
  f_j(\lambda)&=&g_{j-\numax-3}(\lambda)=\frac{H_{j-\numax-3}'(a\lambda)}{H_{j-\numax-3}(a\lambda)}\lambda . \label{eq:second_terms_f}
\end{eqnarray}
\end{subequations}
Hence,  \eqref{eq:x0} for the problem stated in \eqref{eq:eig_DtN} can be written as
\begin{equation}\label{eq:x0_SMF}
  x_0=-T(\mu)^{-1}\left(\sum_{j=1}^{2\numax+3} A_j\sum_{i=1}^{k} x_if_j^{(i)}(\mu)\right).
\end{equation}
In order to specialize TIAR to the considered NEP a procedure to compute the derivatives in \eqref{eq:x0_SMF} is required.

\subsection{High-order quotient and products rules with Toeplitz matrices}\label{sec:toep}
The nonlinearities \eqref{eq:second_terms_f}  are products and quotients
of analytic functions of the form
\begin{equation}  \label{eq:g_general_form}
     g(\lambda)=\frac{h_1(\lambda)}{h_2(\lambda)}p(\lambda),
\end{equation}
where $h_1$ and $h_2$ are analytic functions and $p$ a polynomial.
Formula \eqref{eq:x0_SMF} includes (high-order) derivatives of such scalar functions.
There are high-order product and quotient rules for
differentiation, which are explicit and
lead to formulas for the high-order derivatives of \eqref{eq:g_general_form}. 
However, the direct application of the
high-order product and quotient rules, i.e., the general Leibniz rule,
are somewhat unsatisfactory in terms of computation
time.
Here we propose to use a formula involving 
Toeplitz matrices, and compute directly a given number of derivatives 
using only matrix-vector operations.

The derivation of our Toeplitz matrix computational formula for the
derivatives, can be done by comparing terms in a Taylor expansion of $g(\lambda)$.
For our purposes it is more natural to 
use matrix functions in the sense
of \cite{Higham:2008:MATFUN}. 
Let $J_\mu$ denote a Jordan matrix with eigenvalue $\mu$,
\begin{equation}\label{eq:jordan}
J_{\mu}=
\small \begin{bmatrix}
  \mu&1      &        &    \\
     &\ddots &\ddots  &    \\
     &       &\ddots  & 1\\
     &       &        &\mu
\end{bmatrix}.
\end{equation}
The matrix function of a Jordan matrix is a triangular Toeplitz matrix \cite[Definition~1.2]{Higham:2008:MATFUN} containing scaled derivatives of the scalar function,
\begin{equation}\label{eq:f_jordan}
  f(J_\mu^T)=
f(J_\mu)^T=
  \begin{bmatrix}
    \frac{f(\mu)}{0!}&        & \\
    \vdots& \ddots & \\
    \frac{f^{(p-1)}(\mu)}{(p-1)!}&\cdots    & \frac{f(\mu)}{0!}
  \end{bmatrix}.
\end{equation}
Therefore, the derivatives of a function for which the corresponding 
matrix function is available can be computed with the formula
\begin{equation}\label{eq:matfun_ders}
\begin{bmatrix}
f(\mu)\\
f'(\mu)\\
\vdots\\
f^{(p-1)}(\mu)
\end{bmatrix}=\operatorname{diag}(u)f(J_\mu^T)e_1,
\end{equation}
where $u^T=[0!,\ldots,(p-1)!]$. Then, using formula \eqref{eq:matfun_ders}, we find that
the derivatives \eqref{eq:g_general_form}
are given by 
\begin{equation}\label{eq:g_ders}
\begin{bmatrix}
g(\mu)\\
g'(\mu)\\
\vdots\\
g^{(p-1)}(\mu)
\end{bmatrix}=\operatorname{diag}(u)h_1(J_\mu^T)h_2(J_\mu^T)^{-1}p(J_\mu^T)e_1.
\end{equation}
The TIAR algorithm can be applied to any NEP expressed in the form \eqref{eq:SMF} for which efficient and reliable computation of the corresponding matrix function is available.
Such matrix function representation 
is analogous to the input for several other methods and software
packages,
e.g., the block Newton method \cite{Kressner:2009:BLOCKNEWTON} 
and NLEIGS \cite{Guttel:2014:NLEIGS}. 
This representation is suitable for
 many NEPs including the problem considered in this paper
 (due to the derivative computation specialization provided here),
  but not in general for NEPs where the number of terms is large.
Completely analogous to  scalar-valued functions, 
products and quotients of matrix functions
are expressed as matrix multiplies and inverses 
(which is formally a consequence of \cite[Theorem~1.15]{Higham:2008:MATFUN}).
\subsection{Derivative computations of Hankel functions}

For the purpose of using formula \eqref{eq:g_ders}, we require the matrix functions corresponding
to \eqref{eq:first_terms}  and \eqref{eq:second_terms_f}.
These matrix functions
are $f_1(S)=I$, $f_2(S)=-S^2$, and 
\begin{equation}\label{eq:f_qoutient}
  f_j(S)=g_{j-\numax-3}(S)=H_{j-\numax-3}'(aS)H_{j-\numax-3}(aS)^{-1}S, \;\;j=3,\ldots,2\numax+3.
\end{equation}
The matrix function \eqref{eq:f_qoutient} could be computed
directly if robust and efficient
methods for matrix Hankel functions were available. 
To the best of our knowledge, there are unfortunately no such specialized methods.

Rather than computing the matrix function of the
Hankel function explicitly, we use that for applying \eqref{eq:g_ders} only
the matrix function of a transposed Jordan block is required. Hence, we compute the Toeplitz matrix \eqref{eq:f_jordan} 
consisting of scaled derivatives.
Then, the derivatives of the Hankel function 
can be robustly computed as follows.
For notational convenience, let  for any $r\in\NN$,
\begin{subequations}\label{eq:hankel_vec}
\begin{eqnarray}
  \underline{H}_r(z)&:=&[H_0(z),H_1(z),\ldots,H_{r-1}(z)]^T  \\
  \underline{H}_{-r}(z)&:=&[H_0(z),H_{-1}(z),\ldots,H_{1-r}(z)]^T  \\
  &=&D_r\underline{H}_r(z),\label{eq:hankel_vec_neg}
\end{eqnarray}
\end{subequations}
where $D_r=\diag(1,-1,1,-1,\ldots)\in\RR^r$
and \eqref{eq:hankel_vec_neg} follows from the symmetry 
of Hankel functions.
Using the recursion formulas for Hankel functions, the infinite
vector of derivatives of the Hankel functions
can be written as 
\begin{equation}\label{eq:hankel_der_recursion}
   \underline{H}'_\infty(z)=B_\infty \underline{H}_\infty(z)
\end{equation}
where $B_\infty$ is given by the infinite extension of
a matrix $B_r$ formed by the sum of a Toeplitz matrix and a rank-one matrix,
\begin{equation}\label{eq:Br}
   B_r=
   \begin{bmatrix}
     0& -1     &       & \\
  1/2 & \ddots &-1/2 &\\
      & \ddots & \ddots &\ddots &\\
      & &  1/2& \ddots & -1/2\\
      & &  & 1/2       & 0
   \end{bmatrix}\in\RR^{r\times r}.
\end{equation}
The relation \eqref{eq:hankel_der_recursion} leads
to a procedure to compute the derivatives 
summarized in the following lemma.  
\begin{lemma}[Hankel function derivative recursions]\label{lem:hankelder}
Let the vector of Hankel functions be defined by \eqref{eq:hankel_vec}
and the tridiagonal matrix $B_r$ in \eqref{eq:Br}. Then,
the $k$th derivatives of Hankel functions are given by
\begin{equation}\label{eq:hankel_der_vec}
   \frac{d^k}{dz^k}\underline{H}_r(az)=a^k[I_r,0,\ldots,0]B_{r+k}^k\underline{H}_{r+k}(az)
\end{equation}
and $\underline{H}^{(k)}_{-r}(z)=D_r\underline{H}^{(k)}_r(z)$.
\end{lemma}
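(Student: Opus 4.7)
My plan is to prove the identity by first establishing the scalar recursion that gives the matrix $B_\infty$, then iterating it and reducing to the finite matrix $B_{r+k}$ via a tridiagonal-bandedness argument, and finally absorbing the scaling factor $a$ by the chain rule. The second claim is an immediate consequence of the symmetry of the Hankel functions in the index.

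First I would verify the base identity $\underline{H}'_\infty(z)=B_\infty\underline{H}_\infty(z)$ row-by-row using the classical Hankel recursions $H'_0(z)=-H_1(z)$ and $H'_\nu(z)=\tfrac{1}{2}(H_{\nu-1}(z)-H_{\nu+1}(z))$ for $\nu\ge 1$. The top row of $B_r$ in \eqref{eq:Br} produces $-H_1$, and the subsequent rows produce $\tfrac{1}{2}H_{\nu-1}-\tfrac{1}{2}H_{\nu+1}$, matching the recursions exactly. Iterating gives $\underline{H}^{(k)}_\infty(z)=B_\infty^k \underline{H}_\infty(z)$ for every $k\ge 0$.

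The main step is to replace the infinite matrix by the finite block $B_{r+k}$. Because $B_\infty$ is tridiagonal, the entries of $B_\infty^k$ admit a path-sum expansion: $[B_\infty^k]_{i,j}$ is a sum over walks $i=i_0,i_1,\ldots,i_k=j$ with $|i_s-i_{s-1}|=1$. Starting from $i\le r$, every intermediate index satisfies $1\le i_s\le i+k\le r+k$, so the paths never leave the block $\{1,\ldots,r+k\}$. Hence $[B_\infty^k]_{i,j}=[B_{r+k}^k]_{i,j}$ for $i\le r$, $j\le r+k$, and bandedness gives $[B_\infty^k]_{i,j}=0$ for $j>r+k$. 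Extracting the first $r$ components yields
\begin{equation*}
\underline{H}^{(k)}_r(z)=[I_r,0,\ldots,0]\,B_{r+k}^k\,\underline{H}_{r+k}(z).
\end{equation*}
The chain rule $\tfrac{d^k}{dz^k}H_\nu(az)=a^k H^{(k)}_\nu(az)$ applied componentwise then introduces the factor $a^k$ and gives \eqref{eq:hankel_der_vec}. For the second assertion, the symmetry $H_{-\nu}(z)=(-1)^\nu H_\nu(z)$ yields $\underline{H}_{-r}(z)=D_r\underline{H}_r(z)$, and since $D_r$ is constant, differentiating $k$ times commutes with $D_r$.

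The main obstacle is to make the truncation step precise without drowning in indices; phrasing it as a walk-counting statement on a tridiagonal adjacency pattern (so that reachable indices are bounded by $i+k$) is the cleanest way, and it simultaneously justifies both that the top $r$ rows of $B_\infty^k$ are supported in the first $r+k$ columns and that they coincide with the corresponding rows of $B_{r+k}^k$. An alternative is a direct induction on $k$ (using the $k=1$ formula with $r$ replaced by $r+k-1$ for the inductive step), which requires essentially the same path-bounding argument to reconcile $B_{r+k-1}^{k-1}[I_{r+k-1},0]$ with $[I_r,0]B_{r+k}^{k-1}$ on the top $r$ rows.
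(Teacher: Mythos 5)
Your proof is correct and follows essentially the same route as the paper's: establish $\underline{H}^{(k)}_\infty(z)=B_\infty^k\,\underline{H}_\infty(z)$ by iterating the tridiagonal recursion, truncate the top $r$ rows of $B_\infty^k$ to those of $B_{r+k}^k$ using bandedness, and absorb the factor $a^k$ via the chain rule. Your walk-counting argument merely makes precise the truncation step that the paper asserts directly from tridiagonality, and you additionally spell out the base identity and the symmetry claim $\underline{H}^{(k)}_{-r}(z)=D_r\underline{H}^{(k)}_r(z)$, which the paper leaves implicit.
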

\begin{proof}

By the chain rule and repeated application of \eqref{eq:hankel_der_recursion}
we have for an arbitrary derivative $k$,
\[
\frac{d^k}{dz^k}\underline{H}_\infty(az)= a^k   \underline{H}^{(k)}_\infty(az) 
   =a^kB_\infty^k \underline{H}_\infty(az).  
\]
The matrix $B_\infty$ is tridiagonal. Therefore, 

\begin{equation}\label{eq:hankel_der_proof_1}
[I_r,0,\ldots] B_\infty^k= [I_r,0,\ldots,0][B_{k+r}^k,0,\ldots,0].
\end{equation}
Then \eqref{eq:hankel_der_vec}
follows from \eqref{eq:hankel_der_proof_1}
with 
\[
\underline{H}^{(k)}_r(z)=[I_r,0,\ldots ,0] \underline{H}^{(k)}_\infty(z)=[I_r,0,\ldots ,0] B_\infty^k \underline{H}_\infty(z).
\]
\end{proof}
\begin{remark}[Alternative ways to compute derivatives of Hankel functions]
There are in principle, many ways to numerically 
compute derivatives of Hankel functions, e.g.,
with various discretization schemes.
An advantage of our approach is that it is exact in exact arithmetic, 
and appears relatively insensitive to round-off errors. 
We describe in Algorithm 1 how the computation can 
be performed with only matrix-vector products. 
This is easily integrated with the pole cancellation technique
described in Section~\ref{sec:pole_cancel}, and the
large number of derivatives required in TIAR.
\end{remark}

\subsection{Pole cancellation and derivative computation algorithm}\label{sec:pole_cancel}

Lemma~\ref{lem:hankelder} does provide
a procedure to compute derivatives of the Hankel functions
and it can be directly used to  specialize TIAR to \eqref{eq:eig_DtN}.
We show in Section~\ref{sec:simulations}
that the direct application of the method numerically works well for some regions
of the complex plane, but unfortunately 
not for the entire complex plane.
The infinite Arnoldi method
is designed for problems which are analytic
in a large domain, and convergence cannot 
be guaranteed for eigenvalues outside the convergence
disk for the power series expansion at $\mu$. Below, use a transformation of the matrix function
to increase the convergence radius by effectively cancelling poles. A similar holomorphic extension was successfully applied to a matrix-valued function that is used to determine surface waves in soil mechanics \cite[Section~7.3.1]{Beeumen2015}. Suppose $z_i$, $i=1,\ldots,p$ are zeros of Hankel functions and define
\[
  \tilde{T}(\lambda):=(\lambda-z_1)\cdots(\lambda-z_p)T(\lambda).
\]
From Proposition \ref{prop:tildeT} follows that $\tilde{T}$ is holomorphic in a domain containing $z_i$, $i=1,\ldots,p$. Hence, we have a decomposition analogous to 
\eqref{eq:SMF} where $\tilde{T}(\lambda)=\sum_{i}^NA_i \tilde{f}_i(\lambda)$,
with
\[
\tilde{f}_i(\lambda):=(\lambda-z_1)\cdots(\lambda-z_p)f_i(\lambda).
\]
Note that the nonlinear terms of $\tilde T$ are
 $\tilde{g}_m(\lambda)=(\lambda-z_1)\cdots(\lambda-z_p)g_m(\lambda)$, $m=-\numax,\ldots,m$, which are terms of the form \eqref{eq:g_general_form} with $p(\lambda):=(\lambda_1-z_1)\cdots(\lambda_1-z_p)\lambda$. 
Therefore, we can still use formula \eqref{eq:g_ders}.
By construction, the nonlinear (matrix) eigenvalue problem associated
with $\tilde{T}$ has
the same solutions as the NEP associated with $T$, 
except for possibly $\lambda=z_i$. 
Moreover, the singularity set of $\tilde{T}$ is the same as $T$
except that $z_1,\ldots,z_p$ are not poles of $\tilde{T}$. 
The values $\lambda=z_i$ 
are not solutions to the NEP $T$, since $z_i$ is
a pole. However, Proposition \ref{prop:tildeT} shows that the poles of the original operator function are eigenvalues of infinite multiplicity of the operator function obtained by multiplying with a polynomial. 
The matrix function $\tilde{T}$ will then numerically have several additional eigenvalues close to the poles compared to the eigenvalues of $T$. It is therefore essential that an estimate of the quality of a computed eigenpair of $\tilde{T}$ is based on the original matrix function $T$. In Section~\ref{sec:simulations}, we estimate the quality of a computed eigenpar by the standard backward error estimate 
(consistent e.g. with \cite{Liao:2010:NLRR}) for NEPs
\[
  \|T(\lambda)v\|/\alpha(\lambda,v),\,\,\hbox{with}\,\,
  \alpha(\lambda,v):=\|A\|+|\lambda|^2\|M\|+
  \sum_{\nu=-l}^l|\lambda| a|H'_\nu(a\lambda)|/|H_\nu(a\lambda)|\|Q^{\nu}\|.
\]
We summarize the combination of
\eqref{eq:g_ders} with the Hankel function
derivative computation in Lemma~\ref{lem:hankelder} in Algorithm~\ref{alg:dercomp}.
In the algorithm we have taken advantage of the structure of
the numerator and denominator in \eqref{eq:g_general_form},
and that the derivatives corresponding to all needed
indexes can be computed simultaneously.
The output of the algorithm is the matrix consisting of derivatives 
of $\tilde{g}_{i}$, 
\begin{equation}  \label{eq:dermat}
X=
\begin{bmatrix}
x_1 & \cdots & x_{\numax+1}
\end{bmatrix}=
\begin{bmatrix}
  \tilde{g}_0^{(0)}& \cdots  & \tilde{g}_\numax^{(0)}\\
   \vdots          &         &  \vdots\\
  \tilde{g}_0^{(\kmax)}& \cdots     &  \tilde{g}_\numax^{(\kmax)}\ \\
\end{bmatrix}.
\end{equation}
Note that $\tilde{g}_i=\tilde{g}_{-i}$ such that we can use \eqref{eq:dermat}
also for negative index.

\begin{algorithm} 
\caption{Derivative computation for \eqref{eq:dermat} \label{alg:dercomp}}
\SetKwInOut{Input}{input}\SetKwInOut{Output}{output}
\Input{Number of derivatives $\kmax$, largest index in modulus $\numax$,
sequence of poles $z_1,\ldots,z_p$}
\Output{The matrix $X$ in \eqref{eq:dermat} consisting of derivatives of $\tilde{g}_i$} 
\BlankLine 
\nl Compute $r_0=\underline{H}_{\kmax+\numax+1}(\mu)\in\CC^{\kmax+\numax+1}$ with \eqref{eq:hankel_vec} \\
\For{$k=1,\ldots,\kmax+1$ }{
\nl Compute $r_k=aB_{\kmax+\numax}r_{k-1}$ where $B_{\kmax+\numax+1}$ is given by \eqref{eq:Br}\\
}
\nl Set $q=J^Te_1$ where $J=J_\mu\in\CC^{\kmax\times\kmax}$ given in \eqref{eq:jordan}\\
\For{$i=1,\ldots,p$}{
\nl Set $q=J^Tq-z_i q$
}
\For{$i=1,\ldots,\numax+1$}{
\nl Compute the tridiagonal Toeplitz matrix $H\in\CC^{\kmax\times \kmax}$ with \eqref{eq:f_jordan}
where we set $f^{(j)}(\mu):=(r_{j})_i$, $j=0,\ldots,\kmax$\\
\nl Compute the tridiagonal Toeplitz matrix $H'\in\CC^{\kmax\times \kmax}$  with \eqref{eq:f_jordan}
where we set $f^{(j)}(\mu):=(r_{j+1})_i$, $j=0,\ldots,\kmax$
\\
\nl Set $x_i=H'H^{-1}q$\\
}
\end{algorithm}

\begin{remark}{Efficiency improvements and memory requirements.}
Some further improvements in the computation of $x_0$ were
achieved by also using the localization of the vectors $q_i$. More precisely,
for $i>p+2$ the contribution of $A_1$ and $A_2$ do not need to be taken
into account.
Note that in order to carry out $m$ steps of TIAR for a NEP of size $N$, we mainly need to store a complex basis matrix of size $N\times m$. Hence, in our setting the memory required by TIAR is far less demanding than the storage required for the LU-factorization used for performing the initial step in \eqref{eq:x0_SMF}.
\end{remark}

\section{Numerical simulations} \label{sec:simulations}
In the numerical computations, we discretize \eqref{eq:eig_prob} with a finite element method and consider two geometries with smooth interfaces. Then  we approximate the eigenvalues of the matrix problem \eqref{eq:eig_DtN} with the new version of TIAR outlined in Section \ref{sec:TIAR}.
Note that in order to preserve the accuracy for basis functions of degree larger than one it is, for the considered geometries, mandatory to use curvilinear elements \cite{Ciarlet1972,brenner02}. When the eigenvalues are semi-simple, we expect optimal convergence with the $h$-version and with the $p$-version of the finite element method \cite{MR1115240}.  In $h$-FEM, we fix the polynomial order $p$ of the basis functions and decrease the maximum size of the cells $h$. Then, the optimal convergence is algebraic:
\begin{equation}
		\left|\frac{\lambda_j-\lambda_j^\fem}{\lambda_j}\right|\leq c N^{-p},\,\,c>0,
		\label{eq:hfem}
\end{equation}
where $N$ denotes the number of degrees of freedom. In $p$-FEM, we fix the mesh and increase $p$, which result in the exponential rate of convergence 
\begin{equation}
	\left|\frac{\lambda_j-\lambda_j^\fem}{\lambda_j}\right|\leq \alpha e^{ -\beta N^\frac{1}{2}},\,\,\alpha,\beta>0.
	\label{eq:pfem}
\end{equation}
It is well known that $hp$-FEM ($p$-FEM in the case of smooth interfaces) is superior to $h$-FEM in terms of accuracy vs number of degrees of freedom \cite{Schwab1998} but the use of higher order basis functions results in less sparse matrices.  Therefore, it is for a given matrix size more time consuming to solve a matrix eigenvalue problem generated by a high-order finite element method. 
The studied NEPs are solved with the specialization of the infinite Arnoldi method outlined in Section \ref{sec:TIAR}. We illustrate the convergence of the finite element method as well as the performance of the linear algebra solver. In particular, we show that a discretization with the $p$-version of the finite element method outlined in Section \ref{sec:Disc} together with the new version of the infinite Arnoldi method is an efficient tool for resonance calculations.
\begin{table}
\robustify\bfseries
\centering
\begin{tabular}{rrS[table-format=1.12, detect-weight]
                 S[table-format=1.12, detect-weight]
                 rrS[table-format=1.12, detect-weight]
                 S[table-format=1.12, detect-weight] }
\toprule
{$j$} & {$m$} & {$\Re{\lambda_j}$} & {$\Im{\lambda_j}$} & {$j$} & {$m$} & {$\Re{\lambda_j}$} & {$\Im{\lambda_j}$} \\
\midrule
1 &  1	 &  9.021766303207 & -0.273829280623 & 4 &  0	 & 19.243876046899 & -0.274713999601 \\
2 &  8	 &  8.936779164355 & -0.164935525246 & 5 & 19	 & 19.241527655113 & -0.104420737352 \\
3 & 14	 &  8.783835782061 & -0.000247588219 & 6 & 25	 & 19.156200970821 & -0.000653924318 \\
\bottomrule
\\
\end{tabular}

\caption{\emph{Selected reference eigenvalues for the problem \ref{sec:SD}, computed from \eqref{eq:reson} and ordered by $|\Im \lambda_j|$.}}
\label{tab:SD_reference}
\end{table}
\begin{figure}
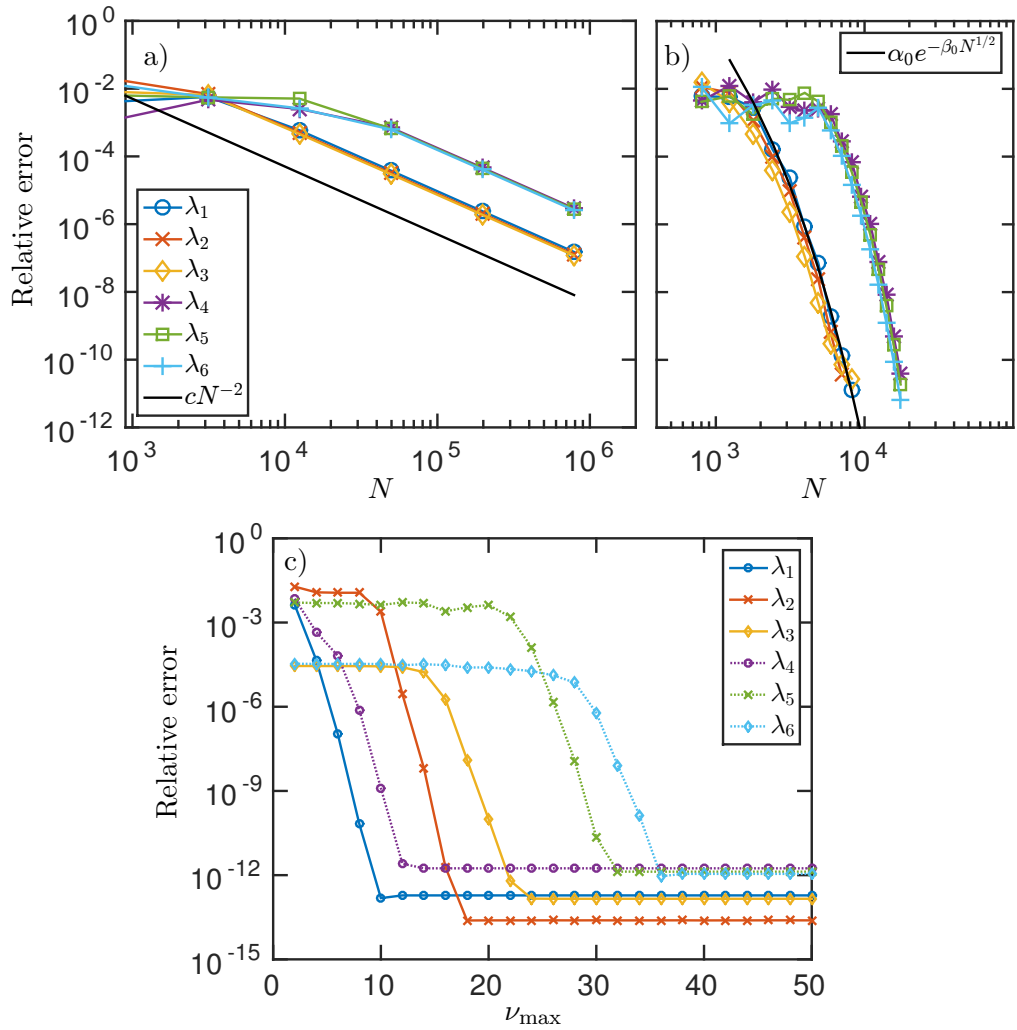

	\centering
	\begin{tikzpicture}
		\draw( 0.00, 8.80) node {\includegraphics[scale=0.45]{SD_TIAR_conv_hFEM_p2.pdf}};
		\draw( 6.50, 8.71) node {\includegraphics[scale=0.45]{SD_TIAR_conv_pFEM.pdf} };
		\draw( 2.20, 1.90) node {\includegraphics[scale=0.45]{conv_terms_sf_SD.pdf}   };
		
		\node[text width=0.3cm] at ( -2.45, 11.20) {a)};
		\node[text width=0.3cm] at (  4.40, 11.20) {b)};
		\node[text width=0.3cm] at ( -0.60,  4.50) {c)};
		
		\node[text width=0.3cm] at ( 0.5,  5.5) {$N$};
		\node[text width=0.3cm] at ( 6.2,  5.5) {$N$};
		\node[text width=0.3cm] at ( 2.3, -1.5) {$\numax$};
		
		\node[text width=3.0cm, rotate=90] at ( -4.2,  9.4) {Relative error};
		\node[text width=3.0cm, rotate=90] at ( -2.3,  2.6) {Relative error};
	\end{tikzpicture}
	\caption{\emph{Convergence for problem \ref{sec:SD}: a) $h$-FEM with $p=2$ and b) $p$-FEM with fitting curve (black): $\alpha_0 = 1.22\times\,10^{5}, \beta_0 = 0.4081$. c) convergence with respect to $\numax$.}}\label{fig:SD_convergence}
\end{figure}
\subsection{Single disk problem}\label{sec:SD}
In this subsection, we consider the classical dielectric disk resonator \cite{dett09}. Using separation of variables, we obtain equation \eqref{eq:reson}. A complex Newton root finder is used to compute very accurate approximations of the resonances. These approximations are used as a benchmark for studying the convergence of the used finite element methods together with the new version of the infinite Arnoldi method.

Consider an open disk $\Om_R$ with radius $R$ and refractive index $\eta(r)=\eta_s$ in $\Om_R$ and $\eta(r)=1$ in $\Om_R^+:=\mx R^2\setminus\overline{\Om}_R$.
Exact solutions to this problem are given in \cite{dett09} and the resonance relationship reads 
\begin{equation}
J_m (\lambda \eta_s R) (H_m^{(1)} (\lambda R))'-
\eta_s J'_m(\lambda \eta_s R) H_m^{(1)} (\lambda R)=0.
\label{eq:reson}
\end{equation}
For each $m$ in equation \eqref{eq:reson}, we search numerically $n$ resonances $\lambda_{m 1},\ldots,\lambda_{m n}$. The Newton root finder presented in \cite{Yau98} is used with machine precision stopping criterion.

In Table \ref{tab:SD_reference}, we list a selection of benchmark values $\lambda_{m n}$ computed from \eqref{eq:reson}, which are used to evaluate accuracy and convergence of the numerical solutions. 

The solutions $\lambda_{m n}$ are classified as \emph{external resonances} or \emph{internal resonances} \cite{dett09}. The internal resonances, also called \emph{Whispering Gallery Modes} (WGM),
have broad applications in optics, photonics, communications, and engineering \cite{Ilch06}. These resonances feature negative imaginary parts that are close to the real axis. Approximation of the internal modes with FEM require that the oscillatory behavior inside the resonator is resolved, but the modes outside the resonator are almost constant and therefore less demanding to approximate. 

Exterior resonances are characterized by having large negative imaginary parts and the corresponding modes grow very quickly outside the resonator. 
Hence, FEM approximation of these modes is demanding.
\begin{table}
\robustify\bfseries
\centering
\begin{tabular}{rS[table-format=3.10, detect-weight]
                 S[table-format=1.13, detect-weight]
                rS[table-format=3.10, detect-weight]
                 S[table-format=1.14, detect-weight] }
\toprule
{$j$} & {$\re \lambda_j$} & {$\im \lambda_j$} & {$j$} & {$\re \lambda_j$} & {$\im \lambda_j$} \\
\midrule
1  &  3.499842     & - 8.4003189     & 15 & 19.2563064489 & - 0.06652895684  \\
2  &  3.082426     & - 8.1795102     & \bftab 16 &  99.0706091289 &  -0.53389807975   \\
3  &  3.662856165  & - 4.980016551   & \bftab 17 &  98.6967997099 &  -0.386922639003  \\
4  &  3.035882038  & - 4.910209072   & 18 & 98.8356759636 & - 0.069143373791  \\
5	 &	1.0986166110 & - 1.00574509569 & 19 & 98.8254516105 & - 0.059689390790  \\
6	 &  2.1655203793 & - 0.53731229013 & 20 & 99.4061929966 & - 0.000006880063  \\
7  &  4.3700360826 & - 1.52656168626 & 21 & 99.4061923466 & - 0.000006849959  \\
8  &  3.9580686857 & - 0.52895955645 & 22 & 99.4061921097 & - 0.000009251593  \\
9  &  4.8949905287 & - 0.40281083717 & \bftab 23 &  99.4061941089 &  -0.000009059285  \\
10 & 20.0018652230 & - 1.19979332765 & 24 & 99.2530774600 & -0.000000004940  \\
\bftab 11 &  19.1590173402 &  -0.63161790252  & 25 & 99.2530774626 & - 0.000000004465  \\
12 & 20.3296169084 & - 0.53046501438  & 26 & 99.2530774593 & - 0.000000004441  \\
13 & 21.0596179198 & - 0.41347266647  & 27 & 99.2530774637 & -0.000000004263  \\
\bftab 14 &  19.1765650857 &  -0.08415304732  & \bftab 28 &  99.2229411961 &  -0.0000000000001 \\
\bottomrule
\end{tabular}

	




\caption{\emph{Reference eigenvalues for problem \ref{sec:TD}, computed with with $p=30$, $N$=$317\,281$.}}
\label{tab:TD_reference}
\end{table}
\subsubsection{Approximation with FEM and TIAR}
If the DtN map \eqref{eq:DtN} is placed at $a=R$, then only one term is non-zero because the DtN map coincides with the compatibility condition for derivatives. However, by shifting the disk $\Omega_R$ a distance $d$ away from the origin and taking $a>R+d$, the radial symmetry is lost and more terms are necessary in the DtN map \eqref{eq:DtN}. This more demanding problem formulation is used to test our solution scheme in terms of accuracy as well as convergence with respect to the number of DtN terms. Let $\Gamma_a:=\partial\Omega_a$ denote a circle of radius $a$ centred in the origin and let $\Omega^d_R\subset\Omega_a$ denote the shifted disk. We choose $R=1$ with the shifting distance $d=R/2$ along the $x$-axis, $\eta_s=2$, and $a=2$.

Once the geometry is set, we discretize \eqref{eq:eig_prob} with the finite element method outlined in Section \ref{sec:DtN_2d} and apply the new specialization of TIAR in Section \ref{sec:TIAR} to a FE discretization with $p=20$, $a=2$ and $N=19\,361$. 
In the TIAR scheme we use the shifts $\mu=9-0.1i$ to approximate $\lambda_{1}, \lambda_{2}, \lambda_{3}$ and $\mu=19-0.1i$ for $\lambda_{4}, \lambda_{5}, \lambda_{6}$, where $\lambda_1,\ldots,\lambda_6$ are the reference values in Table \ref{tab:SD_reference}. The resonances $\lambda_{3}$ and $\lambda_{6}$ are classified as interior resonances while the remaining values are classified as exterior resonances \cite{dett09}. For the given shifts, we compute simultaneously approximations to both types of resonances. Hence, the used finite element space must be able to capture oscillations in the interior of the resonator and rapid growth in the exterior.

Figure \ref{fig:SD_convergence} illustrates the convergence of a sequence of eigenvalues $\lambda_j^\fem$ approaching the reference values $\lambda_j$ in Table \ref{tab:SD_reference}. Optimal convergence rates are reached: \eqref{eq:hfem} in a) and \eqref{eq:pfem} in b).
For the analysis of the $\numax$-dependence we work with a fixed discretization and plot in Figure \ref{fig:SD_convergence} (c) relative errors vs $\numax$. We observe a preasymptotic phase until a critical $\numax=\tilde \nu$ is reached and for $\numax > \tilde \nu$ the convergence is very rapid. Figure \ref{fig:SD_convergence} (c) also illustrates stability of TIAR in the sense that having more $\nu$ terms than $\tilde \nu$ in the numerical experiments did not result in larger numerical errors.

As expected, errors in the approximations $\lambda_j^\fem$ drop faster when $\lambda_j$ in Table \ref {tab:SD_reference} correspond to an eigenvalue with small $m$. The empirical rule $\numax>a\lambda$ was used in \cite{Harari92} as an estimate of the necessary number of truncation terms $\numax$ in the DtN method applied for source problems ($\lambda\in\mx R$). Our computations indicate that $\numax>a\re\lambda$ can be used for the current eigenvalue problem. For example, from Figure \ref{fig:SD_convergence} (c), it is clear that the relative errors of the numerical $\lambda_j^\fem$ decrease below $10^{-9}$ for $\numax > a\re \lambda_j^\fem$.
\begin{figure}
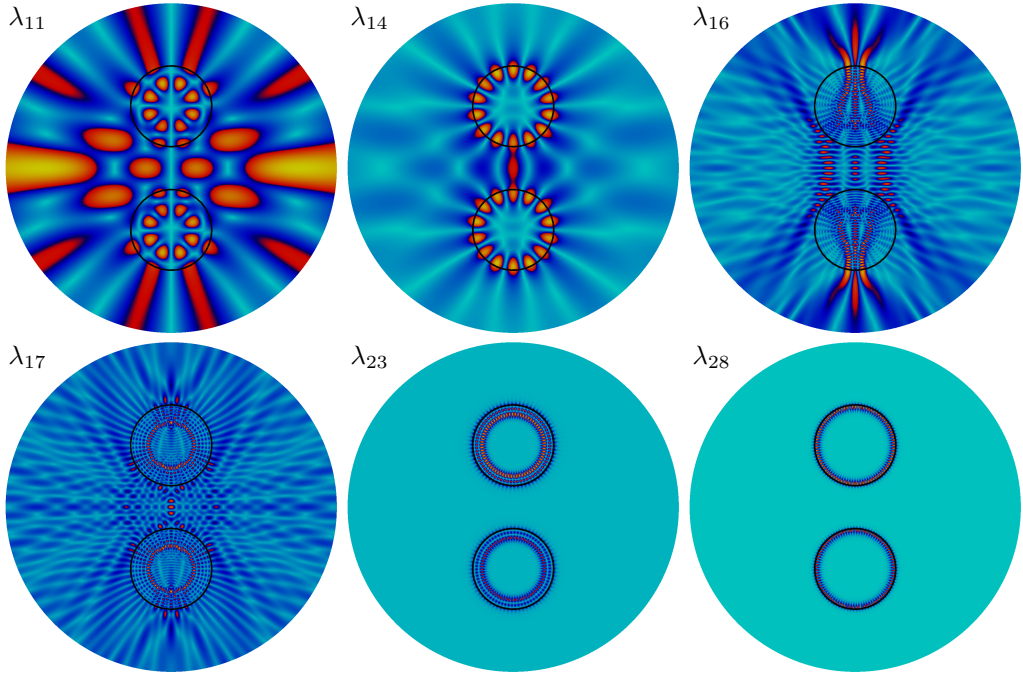

	\centering
	\begin{tikzpicture}
		\draw( 0.00, 4.5) node { \includegraphics[scale=0.108]{{19.15m0.63i_a}.png} }; 
		\draw( 4.50, 4.5) node { \includegraphics[scale=0.108]{{19.17m0.08i_a}.png} }; 
		\draw( 9.00, 4.5) node { \includegraphics[scale=0.108]{{99.07m0.53i_a}.png} }; 
		
		\draw( 0.00, 0.0) node { \includegraphics[scale=0.108]{{98.69m0.38i_a}.png} }; 
		\draw( 4.50, 0.0) node { \includegraphics[scale=0.108]{{99.40m0.00i_a}.png} }; 
		\draw( 9.00, 0.0) node { \includegraphics[scale=0.108]{{99.22m0.00i_a}.png} }; 
		
		\node[text width=0.3cm] at ( 0.0-2.0, 4.5+2.0) {{\small $\lambda_{11}$}};
		\node[text width=0.3cm] at ( 4.5-2.0, 4.5+2.0) {{\small $\lambda_{14}$}};
		\node[text width=0.3cm] at ( 9.0-2.0, 4.5+2.0) {{\small $\lambda_{16}$}};
		\node[text width=0.3cm] at ( 0.0-2.0, 0.0+2.0) {{\small $\lambda_{17}$}};
		\node[text width=0.3cm] at ( 4.5-2.0, 0.0+2.0) {{\small $\lambda_{23}$}};
		\node[text width=0.3cm] at ( 9.0-2.0, 0.0+2.0) {{\small $\lambda_{28}$}};
		
	\end{tikzpicture}
	\caption{\emph{Fields $|u_j(x_1,x_2)|$ corresponding to eigenvalues $\lambda_j$ given with bold font in Table \ref{tab:TD_reference}.	}}
	\label{fig:TDP_functions}
\end{figure}
\subsection{Disk dimer problem}\label{sec:TD}
Adjacent resonators are of particular interest as they exhibit oscillatory modes that cannot be excited by single resonators and have interesting physical properties such as special mode symmetries (fanoresonances) that may exhibit strong coupling, and higher $Q$-factors \cite{Hump16,Ros15,Mort14}. 

The studied geometry consists of two disks of radius $R=1/4$ separated vertically by a distance $s=R$. Each disk has constant refractive index $\eta=2$, and are surrounded by vacuum $\eta=1$. The setting is such that $\hbox{supp}(\eta^2-1)\subset\Omega_a$ with $a=1$. We study convergence by computing reference eigenvalues $\lambda_j$ with the new specialization of TIAR, outlined in section \ref{sec:TIAR}, applied to a discretization with $p=30$ and list them in Table \ref{tab:TD_reference}.
\\
Since no exact solution is available, we study convergence with respect to three different parameters: degrees of freedom $N$, $\nu$ and TIAR iterations $k$.

\subsection{Approximation with FEM and TIAR}\label{sec:app_TD}
In Figure \ref{fig:TD_convergence} we show convergence for the disk dimer problem. In (a) and (b) convergence with respect to $N$ and in c) convergence with respect to $\nu$. The computed relative errors of the eigenvalues satisfy the estimates \eqref{eq:hfem} and \eqref{eq:pfem}. Hence, the numerical computation indicate that the asymptotic convergence rates are optimal. As expected, the preasymptotic phase is longer for large $\re \lambda_j$ \cite{Sauter10} and eigenvalues with smaller $\re\lambda_j$ converge faster. This can be seen by comparing the model \eqref{eq:pfem} for different $\lambda^\fem_j$. The fitted curves following $\lambda_{11}^\fem$ and $\lambda_{17}^\fem$ are plotted in dashed-black and solid-black lines in Figure \ref{fig:TD_convergence} (b). The exponential rate for $\lambda_{11}$ is $\beta_1=0.1799$ and for $\lambda_{17}$ is $\beta_2=0.1203$, then $\beta_1>\beta_2$ is in agreement with $\re \lambda_{11}=19.159<\re \lambda_{17}=98.697$.

The convergence of relative error with respect to $\numax$ behaves similarly as described in Section \ref{sec:SD}. 
In Figure \ref{fig:TD_convergence} (c), the real part of $\lambda_{11}$ is $19.159$ and the relative error drops below $10^{-8}$ for $\numax>20$. Similarly for $\re\lambda_{16}=99.0706$, the relative error is below $10^{-10}$ for all $\numax>80$. 

Figure \ref{fig:TDP_functions} shows that the function $u_{28}$ corresponding to $\lambda_{28}$ oscillates in a confined region around the resonator resembling WGMs \cite{Ilch06}. Furthermore, the figure shows that $|u_{28}(x)|\approx\,\,$constant for $x\in\Gamma_a$. Hence, in the Fourier series \eqref{eq:dtn_1} only the $\nu=0$ term is necessary to accurately approximate $u_{28}$. In agreement, Figure \ref{fig:TD_convergence} (c) shows that the relative error in $\lambda_{28}$ is approximately $10^{-9}$ for $\numax\geq 0$.
\begin{figure}
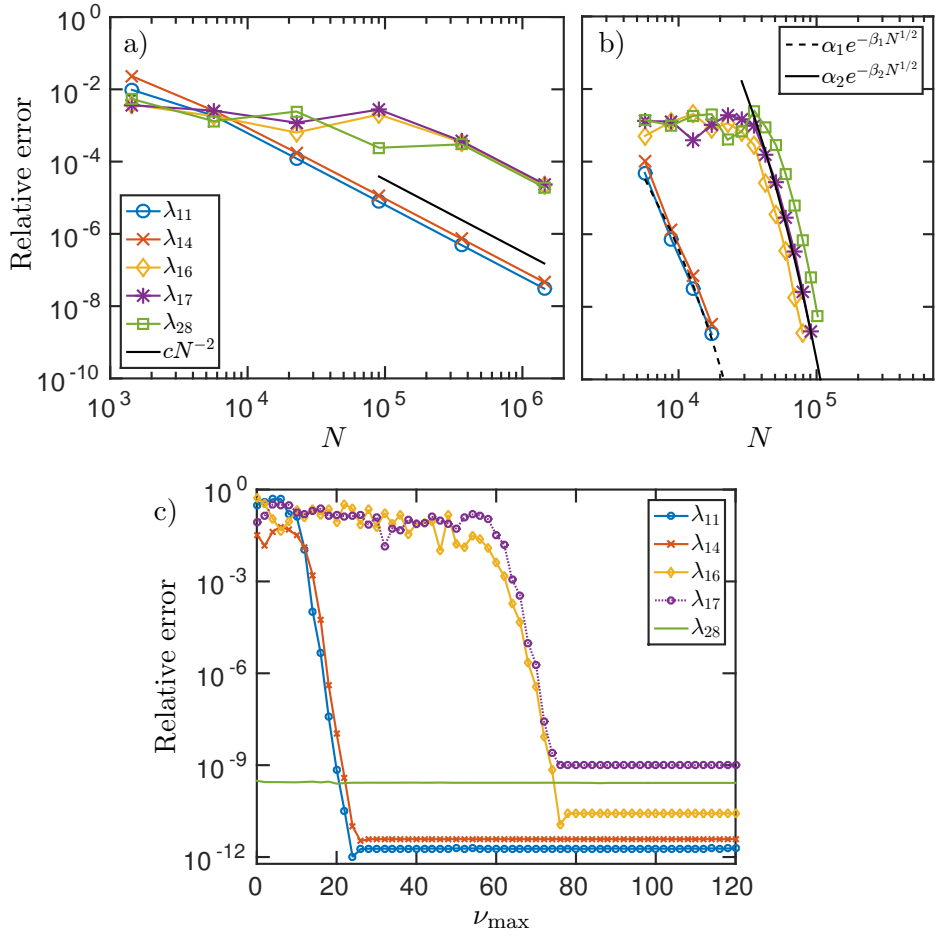

	\centering
	\begin{tikzpicture}
	
		\draw( 0.00, 7.50) node { \includegraphics[scale=0.40]{TD_TIAR_conv_hFEM_p2.pdf}};
		\draw( 6.00, 7.42) node { \includegraphics[scale=0.40]{TD_TIAR_conv_pFEM.pdf} };
		\draw( 2.20, 1.20) node { \includegraphics[scale=0.40]{conv_terms_TD.pdf}   };
		
		\node[text width=0.3cm] at ( -2.2, 9.7) {a)};
		\node[text width=0.3cm] at (  4.05, 9.7) {b)};
		\node[text width=0.3cm] at ( -1.8, 3.5) {c)};
		
		\node[text width=0.3cm] at ( 0.4,  4.5) {$N$};
		\node[text width=0.3cm] at ( 6.0,  4.5) {$N$};
		\node[text width=0.3cm] at ( 2.4, -1.9) {$\numax$};
		
		\node[text width=3.0cm, rotate=90] at ( -3.7,  8.2) {Relative error};
		\node[text width=3.0cm, rotate=90] at ( -1.8,  1.8) {Relative error};
		
	\end{tikzpicture}
	\caption{\emph{Convergence for problem \ref{sec:TD}: a) $h$-FEM with $p=2$ and b) $p$-FEM. The fitting curves in black use: 
$\alpha_1 = 26.46,    \beta_1 = 0.1799,
\alpha_2 = 1.14\times\,10^{7}, \beta_2 = 0.1203$. c) convergence with respect to $\numax$.
	}}
	\label{fig:TD_convergence}
\end{figure}

In Figure \ref{fig:cancel} (a) we present a selection of computed eigenvalues of equation \eqref{eq:eig_DtN} corresponding to the disk dimer problem. The eigenvalues marked with red stars are listed in Table \ref{tab:TD_reference}. The blue bullets correspond to the poles $z_j$ of $G(\lambda)$ defined in \eqref{eq:def_fem} with $a=1$. In the Figure \ref{fig:cancel} (b) we illustrate a situation with $a=2$, where there are poles in between the shift $\mu$ and the closest $\lambda^\fem_j$. In this case a pole gets very close to $\lambda_{5}$ as illustrated in the Figure \ref{fig:cancel} (b). We evaluate the effectiveness of the pole cancellation technique by choosing $\mu=1.15-0.8i$ and running TIAR with and without pole cancellation. In Figure \ref{fig:cancel_ite} (b) we show the convergence vs. iterations of the experiment described above. Without pole cancellation we only get convergence for $\lambda_5$ (red dashed line), until stagnation at around $10^{-5}$. When using pole cancellation, $\tilde\lambda_5$ converges until machine precision (solid line) and $\tilde\lambda_6$ also converges (dotted line), which suggests that the radius of convergence is greater when pole cancellation is used. \\

\begin{figure}
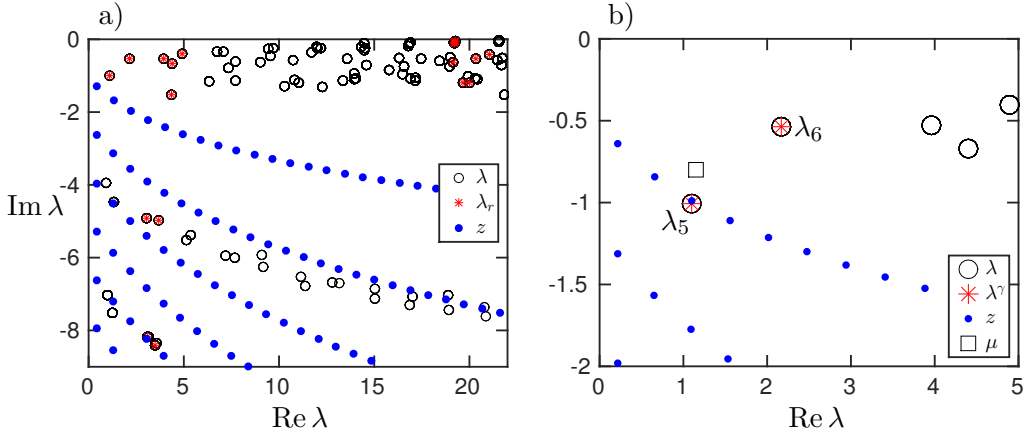

	\centering
	\begin{tikzpicture}
		\draw( 0.00, 2.7) node { \includegraphics[scale=0.35]{TD_eig_box_poles.pdf} };
		\draw( 6.60, 2.7) node { \includegraphics[scale=0.35]{cancel_eigs.pdf}   };
		\node[text width=5.3cm] at ( 0.20, 5.30) {a)};
		\node[text width=5.3cm] at ( 6.90, 5.30) {b)};
		\node[text width=0.5cm] at ( 5.20, 2.58) {{$\lambda_{5}$}};
		\node[text width=0.5cm] at ( 6.95, 3.82) {{$\lambda_{6}$}};
		\node[text width=1.3cm] at ( 0.35, 0.0) {{ $\re \lambda$}};
		\node[text width=1.3cm] at ( 7.15, 0.0) {{ $\re \lambda$}};
		\node[text width=5.3cm] at ( -1.0, 2.8) {{$\im \lambda$}};
	
	\end{tikzpicture}
	\caption{\emph{a) Eigenvalues $\lambda$ computed with several shifts $\mu$ and $a=1$. 
	Selected reference eigenvalues $\lambda_r$, included in Table \ref{tab:TD_reference}, are marked with red stars and poles $z$ in blue bullets. b) Situation illustrating pole cancellation with $a=2$.}}
	\label{fig:cancel}
\end{figure}

\begin{figure}
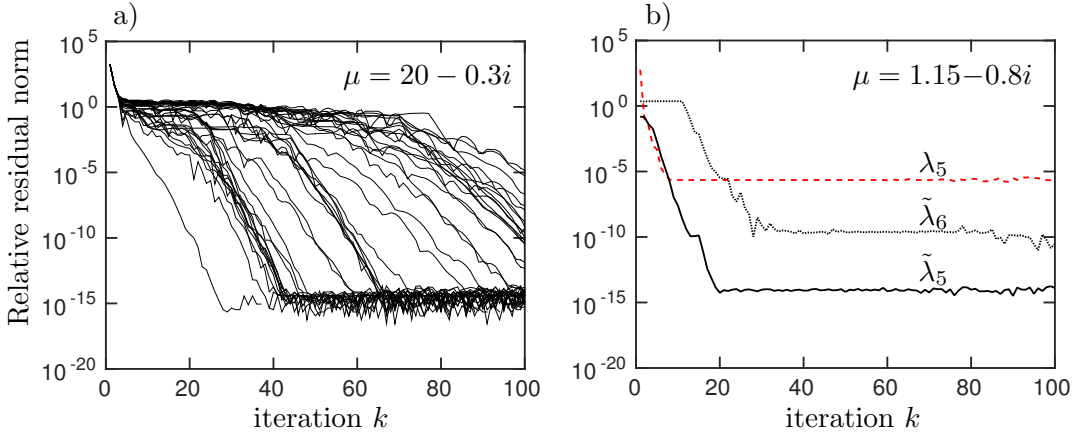

	\centering
	\begin{tikzpicture}
		\draw( 0.00, 1.0) node { \includegraphics[scale=0.35]{TD_it_conv_mu20.pdf} };
		\draw( 7.00, 1.0) node { \includegraphics[scale=0.35]{TD_cancel_hist.pdf} };
		\node[text width=5.3cm] at ( 0.30, 3.60) {a)};
		\node[text width=5.3cm] at ( 7.30, 3.60) {b)};
		\node[text width=0.5cm] at ( 8.50, 0.25) {{$\tilde\lambda_{5}$}};
		\node[text width=0.5cm] at ( 8.50, 1.0) {{$\tilde\lambda_{6}$}};
		\node[text width=0.5cm] at ( 8.50, 1.65) {{$\lambda_{5}$}};
		\node[text width=2.5cm] at ( 1.80, 2.80) {{ $\mu=20-0.3i$ }};
		\node[text width=2.5cm] at ( 8.50, 2.80) {{ $\mu=1.15-0.8i$}};
		
		\node[text width=4.0cm, rotate=90] at ( -3.6, 1.5) {Relative residual norm};
		
		\node[text width=3.0cm] at ( 1.00, -1.7) {iteration $k$};
		\node[text width=3.0cm] at ( 8.00, -1.7) {iteration $k$};
		
	\end{tikzpicture}
	\caption{\emph{a) TIAR eigenvalue convergence vs iterations. b) Case illustrating Pole cancellation. We show convergence without pole canceling in red dashed line and convergence using pole canceling in solid line.}}
	\label{fig:cancel_ite}
\end{figure}

\noindent\emph{Performance comparison:} Below we briefly discuss the performance of the proposed NEP solver. In the startup phase, we compute for given shift $\mu$ the LU factorization of $T(\mu)$ and refer to the time spent as LU time. The LU factorization is only computed once and it is used in the inverse operation \eqref{eq:x0_SMF}. 

In Figure \ref{fig:TD_performance} we show performance plots for these routines and give in colors the relative error of $\lambda_{14}$ for each computation. By drawing a vertical line in Figure \ref{fig:TD_performance} (a), it becomes clear that LU computation becomes more expensive for $p$-FEM than $h$-FEM. This is expected as matrices generated from $p$-FEM are denser than those from $h$-FEM. Moreover, from the plot we see that the TIAR performance is fairly balanced for both $p$-FEM and $h$-FEM depending mostly on $N$.

Regarding accuracy, we can draw a horizontal line in any of the plots in Figure \ref{fig:TD_performance} and pick two computations that lie close to this line, meaning that both took similar computational times. Then by comparing the errors given in colors, it is apparent that $p$-FEM reachs lower errors than $h$-FEM for all computations. In Table \ref{tab:performance_TD} we list data for the numerical computation of $\lambda_{14}$ and
compare the performance for $h$-FEM and $p$-FEM by keeping accuracy fixed. The columns with $p=1,2$ are referred as $h$-FEM (low polynomial orders and several mesh refinements) and the column with $p=11$ is $p$-FEM. In the last column ($p=20$) we list data for a highly accurate $p$-FEM discretization.
The simulations suggest that for the given problems \ref{sec:SD} and \ref{sec:TD} $p$-FEM surpasses $h$-FEM in terms of performance for the proposed NEP solver. 

\begin{table}
	\centering
\begin{tabular}{|l|r|r|r|r|} 
\hline
$p$ 											& $1$ 								& $2$ 				& $11$ 				& $20$ 	\\ \hline
$|\lambda_j-\lambda^\fem_j|/|\lambda_j|$	
													& $10^{-4}$		& $10^{-4}$ 	& $10^{-4}$ 	& $1.9\times10^{-13}$  \\ \hline
$N$ 											& $1.4\times 10^6$ 		& $22657$ 		& $5697$ 			& $141121$  \\ \hline
LU time	($s$)							& $440$							& $3.23$	 		& $0.47$			&  $228.73$  \\ \hline
TIAR	time	($s$)					& $2464$						& $18.58$	 		& $4.74$			&  $149.05$  \\
\hline
\end{tabular}

	\caption{\emph{Comparison for fixed errors on $\lambda_{14}$ in problem \ref{sec:TD} with shift $\mu=20-0.3i$.}}
	\label{tab:performance_TD}
\end{table}

\begin{figure}
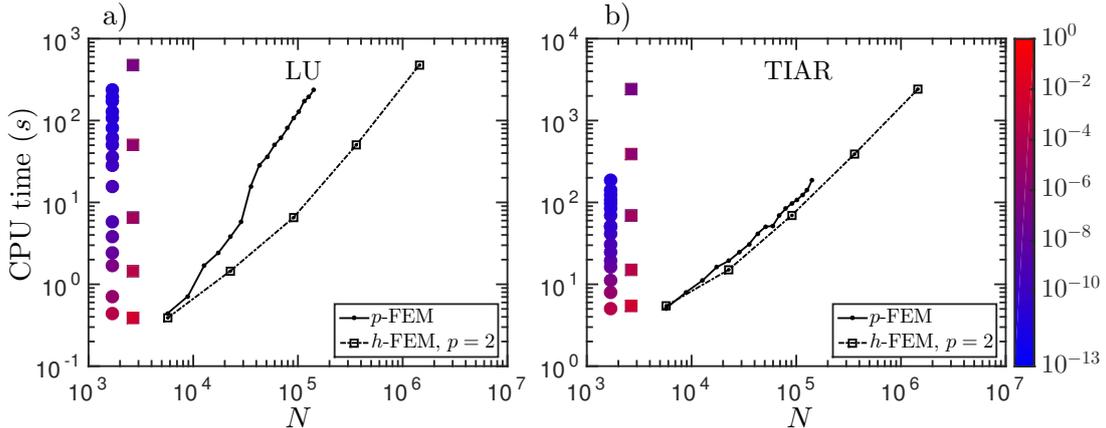

	\centering
	\begin{tikzpicture}
	
		\draw( 0.00, 6.0) node { \includegraphics[scale=0.35]{TD_LU_performance.pdf}   };
		\draw( 6.60, 6.0) node { \includegraphics[scale=0.35]{TD_TIAR_performance.pdf} };
		\draw( 10.20, 6.2) node { \includegraphics[scale=0.35]{colorbar.pdf} };
		
		\node[text width=5.3cm] at ( 0.30, 8.60) {a)};
		\node[text width=5.3cm] at ( 6.90, 8.60) {b)};
		
		\node[text width=0.3cm] at (  0.2, 7.9) {{\small LU}};
		\node[text width=0.3cm] at (  6.5, 7.9) {{\small TIAR}};
		
		\node[text width=3.0cm, rotate=90] at ( -3.4,  6.6) {CPU time ($s$)};
		\node[text width=0.3cm] at ( 0.2, 3.3) {$N$};
		\node[text width=0.3cm] at ( 6.8, 3.3) {$N$};
		
	\end{tikzpicture}
	\caption{\emph{a) LU and b) TIAR performance for problem \ref{sec:TD}. In colors we give relative errors for $p$-FEM (bullets) and $h$-FEM (squares) computed for $\lambda_{14}$ that correspond to the bullets/squares in black.}}
	\label{fig:TD_performance}
\end{figure}

\section{Conclusions and outlook}
We have proposed a fast and efficient method for computing resonances and resonant modes of Helmholtz problems posed as a NEP. The finite element method is used for discretizing the problem, and the resulting NEP is solved with a new specialization of the infinite Arnoldi method. In the numerical experiments we observe that the performance of the TIAR iterations scale linearly with the problem size and it is stable with respect to the number of terms in the Dirichlet-to-Neumann map. A pole cancellation technique was successfully applied to increase the radius of convergence when the shift is close to a pole.  The $h$ and the $p$ version of the finite element method were used to discretize the Fredholm operator function and we showed that the nonlinear matrix eigenvalue solver performs well in all cases. The exponential convergence of the $p$-version of FE, for problems with smooth interfaces, together with the new version of the infinite Arnoldi method is therefore an efficient tool for resonance calculations. Moreover, we expect the same performance for $hp$-FEM, since the sparsity of the matrices do not critically affect the performance of the TIAR method. \\

{\bf Acknowledgments.}
We gratefully acknowledge the support of the Swedish Research Council under Grant No. 621-2012-3863
and 621-2013-4640. J. Ara\'ujo also thanks the department of Mathematics at KTH Royal Institute of Technology very much for the kind hospitality and Giampaolo Mele for interesting discussions held during the visit.

\bibliographystyle{plain}

\end{document}